\documentclass{amsart}

\usepackage{amssymb}
\usepackage{amsfonts}
\usepackage{amsmath}

\newtheorem{defn}{Definition}
\newtheorem{prop}{Proposition}
\newtheorem{ex}{Example}

\newcommand{\2}{\int_0^\infty}
\newcommand{\e}{ xe^{-x^2s^2}}
\newcommand{\R}{\mathbb{R}}

\author{Todd Gaugler}
\title{Applications of the $\mathcal{L}_2$-transform to Partial Differential Equations}
\date{\today}

\address{Todd Gaugler 82-15 57th Ave Elmhurst NY 11373} 
\email{gauglertodd@gmail.com}

\begin{document}
\begin{abstract}
This paper aims to demonstrate the applicability of the $\mathcal{L}_2$-integral transform to Partial Differential Equations (PDEs). Of special interest is section (6), which contains an application of the $\mathcal{L}_2$-transform to a PDE of exponential squared order, but not of exponential order. Sections (1) and (2) aim to introduce the history and some elementary properties of the $\mathcal{L}_2$ transform, (3) and (4) include some of the transform's simple applications, and section (5) introduces the $\mathcal{L}_2$ convolution. 
\end{abstract}
\maketitle

\section{Introduction}

Integral transforms, such as the Fourier and Laplace transforms, have numerous applications to differential and partial-differential equations. For example, the Fourier integral transform can be used to solve the popular heat equation, variations of which form the basis for the partial differential equation that leads to the Black-Scholes model for options pricing in Europe. Alternatively, the Laplace transform has applications to physics in the analysis of linear-time invariant systems, like electrical circuits and mechanical systems.  The $\mathcal{L}_2$-transform already has some well understood applications to classical ordinary differential equations such as Bessel's differential equation \cite{YW}(Yurekli, Wilson (2002)) and Hermite's differential equation\cite{YB}(Yurekli, Wilson (2003)). The goal of this article is to illustrate the applicability of the $\mathcal{L}_2$-transform to solving certain partial differential equations-an application of the $\mathcal{L}_2$-transform never before examined in literature. Of particular interest in this paper is an example of a partial differential equation whose solution can be found by using the $\mathcal{L}_2$-transform, but is not of exponential order. This fact is significant in the sense that the nature of the solution assures us that it could not have been achieved through use of the Fourier transform, which can only handle equations of up to exponential order. 

\section{Properties and Definitions Related to the $\mathcal{L}_2$ Transform} 
Recall that a function $f : [0,\infty) \to \R$ is called \emph{exponential squared order} if 
\[\lim\limits_{x \to \infty} f(x) e^{-x^2} = 0 \]

\begin{defn} For any exponential squared order function $f(t)$, the $\mathcal{L}_2$-transform of $f$ is defined as:
\[
\mathcal{L}_2 \{ f(x);s\} = \int_0^\infty xe^{-x^2s^2} f(x) dx 
\]
\end{defn}

\begin{ex} For $n\geq 0$ the following is true:

\begin{equation} \label {x2n}
\mathcal{L}_2 \{ x^{2n};s  \} = \frac {n!}{2s^{2n+2} }  \
\end{equation}
In the case where $n=0$, the definition of the $\mathcal{L}_2$-transform gives us the following:
\[
\mathcal{L}_2\{1;s\}= \2\e  dx = \frac{1}{2s^2}
\]
When $n=1$, we arrive at the following through integration by parts:
\[
\mathcal{L}_2 \{ x^{2n};s  \} = \left( \frac{-x^2}{2s^2}e^{-x^2s^2} \right) \Big |^\infty_0 +\frac{1}{s^2}\2e^{-x^2s^2}xdx
= \frac{1}{s^2}\left( \frac{1}{2s^2}\right)
\]
Property \eqref{x2n} then follows by a simple induction.
\end{ex} 
There exists a differential operator $\delta_x$, defined as follows:
\[ 
\delta_x= \frac{1}{x} \cdot \frac{d}{dx}
\]
We now recall some of the properties of the $\mathcal{L}_2$-transform. 
( \cite{YS}, \cite{YW}).
\begin{prop} Let $f$ be a function of exponential squared order. For all $n\geq 0$, 
\begin{equation} \label{toperator}
\mathcal{L}_2  \{  \delta_{x}f(x);s \} = 2 s^{2} \mathcal{L}_2 \{ f(x) ;s \} -f(0^+)
\end{equation}
where $f(0^+) = \lim_{x^+\to 0}f(x) $, and

\begin{equation}          \label{t2n}
\mathcal{L}_2  \{  x^{2n}f(x);s \} = \frac{(-1)^{n}}{2^n} \delta_s^n \mathcal{L}_2 \{ f(x) ;s \} .
\end{equation}
\end{prop}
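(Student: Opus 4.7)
Both identities follow from direct manipulation of the defining integral, together with integration by parts and differentiation under the integral sign. The exponential-squared-order hypothesis on $f$ is what makes all boundary terms at $\infty$ vanish and justifies exchanging derivatives with the integral.

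For identity \eqref{toperator}, I would first unfold the definitions on the left-hand side: since $\delta_x f(x) = \frac{1}{x} f'(x)$, the factor of $x$ in the kernel of the $\mathcal{L}_2$-transform cancels, reducing the expression to $\int_0^\infty e^{-x^2 s^2} f'(x)\,dx$. Then I would integrate by parts with $u = e^{-x^2 s^2}$ and $dv = f'(x)\,dx$. The $u$-derivative produces $-2s^2 \cdot x e^{-x^2 s^2}$, which recovers precisely the $\mathcal{L}_2$-kernel times $2s^2$, so the remaining integral is $2s^2 \mathcal{L}_2\{f(x);s\}$. The boundary term at $0$ yields $-f(0^+)$, and the boundary term at $\infty$ is $\lim_{x\to\infty} e^{-x^2 s^2} f(x)$, which vanishes because $f$ is of exponential squared order (so that $f(x) e^{-x^2}\to 0$ and a fortiori $f(x) e^{-x^2 s^2}\to 0$ for $s\neq 0$).

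For identity \eqref{t2n}, the plan is induction on $n$, with the base case $n=1$ obtained by differentiating the transform in $s$. Writing $F(s) = \mathcal{L}_2\{f(x);s\}$ and differentiating under the integral sign gives
\[
\frac{dF}{ds} = \int_0^\infty x\cdot(-2xs)\, e^{-x^2 s^2} f(x)\,dx = -2s\,\mathcal{L}_2\{x^2 f(x);s\},
\]
so $\delta_s F(s) = \frac{1}{s}\frac{dF}{ds} = -2\,\mathcal{L}_2\{x^2 f(x);s\}$, which rearranges to the $n=1$ case $\mathcal{L}_2\{x^2 f(x);s\} = -\tfrac12 \delta_s F(s)$. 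For the inductive step, I would apply the $n=1$ identity to the function $x^{2n} f(x)$, treating it as a new exponential-squared-order function, and then invoke the inductive hypothesis on $\mathcal{L}_2\{x^{2n} f(x);s\}$; this yields $\mathcal{L}_2\{x^{2(n+1)} f(x);s\} = -\tfrac12 \delta_s \bigl(\tfrac{(-1)^n}{2^n}\delta_s^n F(s)\bigr) = \tfrac{(-1)^{n+1}}{2^{n+1}}\delta_s^{n+1} F(s)$, completing the induction.

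The main obstacle, such as it is, is analytic rather than algebraic: one must justify the differentiation under the integral sign in the $n=1$ step and, more generally, verify that $x^{2n} f(x)$ remains of exponential squared order so that the inductive hypothesis applies. The former follows from the fact that the integrand and its $s$-derivative are dominated, on any compact set of $s$ bounded away from $0$, by an integrable function (thanks to the Gaussian factor $e^{-x^2 s^2}$ absorbing any polynomial in $x$). The latter is immediate since $x^{2n} e^{-x^2}\to 0$ as $x\to\infty$, so $x^{2n}f(x) e^{-x^2} = x^{2n} e^{-x^2/2}\cdot f(x) e^{-x^2/2}\to 0$. With these two points in place, the computations above establish both identities.
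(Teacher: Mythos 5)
Your proposal follows essentially the same route as the paper: identity \eqref{toperator} by cancelling the $x$ in the kernel against $\delta_x$ and integrating by parts (with the exponential-squared-order hypothesis killing the boundary term at infinity), and identity \eqref{t2n} by differentiating under the integral sign for $n=1$ and then inducting. Your added remarks on justifying the interchange of $\delta_s$ with the integral and on $x^{2n}f(x)$ remaining of exponential squared order are points the paper passes over silently, but they do not change the argument.
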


\begin{proof}
For the first claim, we calculate
\[
\2\e \delta_x f(x)dx= \2 \e\cdot \frac{1}{x} \frac{d}{dx} f(x)(dx)=\2e^{-x^2s^2}f'(x)dx.
\]
Integrating by parts, we get
\[
f(x)(e^{-x^2s^2} ) \Big |^\infty_0 + \22xs^2 \e f(x)dx.
\]
Evaluating from 0 to $\infty$ and using the fact that $f$ is of exponential squared order, we can write this expression in terms of the $\mathcal{L}_2$-transform:
\[
2s^2\mathcal{L}_2\{f(x);s\} -f(0^+).
\]

For property ($\ref{t2n}$) , taking the case in which n=1, we get:
\[
\delta_s \mathcal{L}_2\{f(x);s \} = \delta_s \2 \e f(x) dx
\]
We can bring the differential operator $\delta_s$ inside this integral, we obtain: 
\[
\2  \left( \frac{1}{s} \right) \frac{d}{ds} \cdot \e f(x) dx = 
 -2 \2  x^2 \cdot \e f(x) dx
\]
We remark that the last term of this equation is equal to:
\[
\mathcal{L}_2 \{-2 x^2 f(x); s\} 
\]
For $n\geq 1$, through induction we arrive at
\[
\delta_s^n \mathcal{L}_2 \{ f(x) ;s \} = -2^n \mathcal{L}_2\{x^{2n} f(x) ; s \} 
\]
Written alternatively,
\[
\mathcal{L}_2  \{  x^{2n}f(x);s \} = \frac{(-1)^{n}}{2^n} \delta_s^n \mathcal{L}_2 \{ f(x) ;s \} .
\]
\end{proof}

\section{A First Application}

Consider the following partial differential equation :

\begin{equation} \label{thePDE}
t^3 u_{tx} + 2xu = 0 \quad \text{and} \quad u(0^+,t) = 0
\end{equation}
where $u = u(x,t)$ for $x,t >0$ and $u(0^+,t) = \lim\limits_{x \to 0^+} u(x,t)$.  

Writing the PDE in Equation (\ref{thePDE}) in terms of the differential operator, we have: 
\[
t^3\frac{1}{x}\frac{d}{dx}u_t+2u =0 
\]
or equivalently,
\[
 t^3\delta_x u_t = -2u.
\]
Taking the $\mathcal{L}_2$-transform of both sides and using property \eqref{t2n} we obtain
\[
2s^2t^3 \hat{u}_t-u(0^+,t)=-2\hat{u} 
\]
where $\hat u = \hat u(s,t)  = \mathcal{L}_2 \{ u(x,t) ;s \}$. Equivalently,
\[
 \hat{u}_t = \frac{-1}{s^2t^3}\hat{u}+\frac{u(0^+,t)}{2s^2t^3}.
\]
By the initial condition in (\ref{thePDE}), this last term is zero.  
One solution to this differential equation is 
\[
\hat u(s,t) = \frac{1}{2s^2}e^{{\frac{t^{-2}s^{-2}}{2}}} 
\]

We now write $\hat u(s,t)$ as a series to obtain:
\[
\hat u(s,t)= \frac{1}{2s^2} \displaystyle\sum\limits_{n=0}^\infty \frac{(\frac{t^{-2}s^{-2}}{2})^{n}}{n!} = \displaystyle\sum\limits_{n=0}^\infty \frac{\frac{1}{2}^{n}}{2s^{2n+2}\cdot t^{2n}\cdot n!}.
\]
Using property \eqref{x2n}, we can calculate that $u(x,t) = \mathcal{L}_2^{-1}\{(\hat u(s,t) ; x\}$ is
\[
u(x,t) = \displaystyle\sum\limits_{n=0}^\infty \frac{\frac{1}{2}^{n} x^{2n}}{t^{2n}( n!)^{2}}.
\]

\section{Further Generalizations}
Similarly, we can solve all partial differential equations of the form:
\begin{equation}
0=f(t)u + f(t)\frac{1}{x}u_x+g(t)\frac{1}{x}u_{xt} \label{PDE2}
\end{equation}
Taking the $\mathcal{L}_2$-transform of the PDE in (\ref{PDE2}), we compute:
\[
0=f(t)\hat{u} + f(t)2s^2\hat{u}+g(t)2s^2\hat{u}_t
\]
Rearranging:
\[
\frac{\hat{u}_t}{\hat{u}} = -\frac{(1+2s^2)}{2s^2}\cdot M(t)
\]
Where $M(t) = \frac{f(t)}{g(t)}$. Claiming that $L(s)$ is any function of $s$, a solution for $\hat{u}$ is as follows:
\[
\hat{u}(s,t)=e^{\frac{-(1+2s^2)}{2s^2}\cdot \int_0^t{M(w)dw}}\cdot L(s)=e^{ \int_0^t{M(w)dw}} \cdot e^{\frac{-\int_0^t{M(w)dw}}{2s^2}}\cdot L(s)
\]
Writing this as a series--denoting $ \int_0^t{M(w)dw}$ as $\mathcal{M}(t)$,and letting $L(s)=\frac{1}{-s^2}$--we get:
\[
\hat{u} = \frac{1}{-s^2} \displaystyle\sum\limits_{n=0}^\infty \frac{-\mathcal{M}(t)^n}{n! \cdot (2s^2)^n}\cdot \displaystyle\sum\limits_{n=0}^\infty \frac{\mathcal{M}(t)^n}{n!}= \displaystyle\sum\limits_{n=0}^\infty \frac{\mathcal{M}(t)^n}{n! \cdot (2^n s^{n+2})}\cdot \sum\limits_{n=0}^\infty \frac{\mathcal{M}(t)^n}{n!}
\]

\[
= \sum \limits_{n=0}^\infty \frac{\mathcal{M}(t)^n}{n! \cdot 2^{(n-1)} \cdot (2s^{n+2}) } \cdot \sum \limits_{n=0}^\infty \frac{\mathcal{M}(t)^n}{ n!}
\]
using identity ($\ref{toperator}$), and taking the inverse $\mathcal{L}_2$-transform, we claim that:
\[
u(x,t)=\sum \limits_{n=0}^\infty  \frac{\mathcal{M}(t)^n \cdot x^{2n} }{(n!)^2 \cdot 2^{n-1}} \cdot \sum \limits_{n=0}^\infty \frac{\mathcal{M}(t)^n}{2 n!} 
\]
Lastly, we can always solve the following types of partial differential equations:
\begin{equation} \label{pde3}
0=f(t)u+f(t)\frac{1}{x}u_x+ g(t)u_t+ g(t)u+{xt}
\end{equation}
Taking the $\mathcal{L}_2$-transform of PDEs of the firm shown in (\ref{pde3}), we get:
\[
0=\hat{u}(1+2s^s)f(t) + \hat{u}_t(1+2s^2)g(t) \Rightarrow \frac{\hat{u}_t}{\hat{u}} = J(t)
\]
Where $J(t)= -\frac{g(t)}{f(t)}$. This implies that  a solution for $\hat{u}$ is as follows:
\[
\hat{u}(s,t) =e^{\int_0^{t} J(w)dw}\cdot L(s)
\]
Where L(s) is an arbitrary function of $s$. Representing this expression as a series, and letting $L(s)= \frac{1}{2s^2}$, we get:
\[
\hat{u}=\sum_{n=0}^{\infty} \frac{(\int_0^{t} J(w)dw)^n}{n! \cdot 2y^2}
\]
And after applying the $\mathcal{L}_2$ inverse to our sum, we get the following solution for $u(x,t)$:
\[
u(x,t)=x^0 \cdot e^{\int_0^{t} J(w)dw)} 
\]

\section{The $\mathcal{L}_2$ Convolution}
\begin{defn}
A binary operation ($\star$) called the {\bfseries convolution} of two functions $f$ and  $g$ is defined as follows:
\[
(f\star g)(t) = \int_0^{t}xf(\sqrt{t^2-x^2})g(x)dx
\]
\end{defn}
It can be shown that this operation is associative and commutative. Most importantly for our purposes, the following are true:
\[
\mathcal{L}_2 \{f\star g;s \}= \mathcal{L}_2(f) \cdot \mathcal{L}_2(g)
\]
and:
\begin{equation}\label{l2property1}
\mathcal{L}_2\{f_1\star f_2\star ... \star f_n; s\} = \mathcal{L}_2(f_1)\cdot  \mathcal{L}_2(f_2)\cdot...\cdot\mathcal{L}_2(f_n)  
\end{equation}
From which it follows that:
\begin{equation}\label{l2property2}
\mathcal{L}^{-1}\{ \hat{f}_1\cdot \hat{f_2}\cdot...\cdot \hat{f}_n\}= f_1\star f_2 \star ...\star f_n  
\end{equation}

\section{An Application to a Partial Differential Equation that is of Exponential Squared Order, and not of Exponential Order}
Consider the following partial differential equation, with the following condition:
\begin{equation}\label{lastex}
0=g(t)u- f(t)u_{t}+ \frac{1}{x}f(x)u_{xt} \quad \quad u(0^+,t)=0
\end{equation}
Writing this equation in terms of the differential operator,
\[
0=g(t)u-f(t)u_t+\delta_x f(t)u_t
\]
and applying the $\mathcal{L}_2$-transform, we arrive at the following expression:
\[
0=g(t)\hat{u}_t-f(t)\hat{u}_t+2s^2f(t)\hat{u}_t
\]
Rearranging,
\[
0=g(t)u+f(t)\hat{f}_t(-1+2s^2)
\]
Which leads to:
\[
\frac{\hat{u}_t}{\hat{u}} = -\frac{1}{-1+2s^2}\cdot H(t)
\]
Where $H(t)=\frac{g(t)}{h(t)}$. This implies that a solution for $\hat{u}$ is as follows:
\[
\hat{u}(s,t)= e^{\frac{-1}{-1+2s^2} \cdot \int_0^tH(w)dw }
\]
Writing this solution as a series, we get:
\[
\hat{u}(s,t)= \sum_{n=0}^{\infty} \frac{ (\int_0^t H(w)dw)^n }{n!} \cdot \left(\frac{1}{-1+2s^2}\right)^n
\]
from $\eqref{l2property1}$, $\eqref{l2property2}$ and the following identity:
\[
\mathcal{L}_2\{e^{ax^2} ; s\} = \frac{1}{-2a+2s^2}
\]
We can arrive at the following solution:
\[
\mathcal{L}_2^{-1} \{\hat{u}(s,t) \} =1+ \sum_{n=1}^\infty \left( \frac{(\int_0^t H(w)dw)^n}{n!} \cdot (e^{(1/2)x^2})^{\star_n} \right)
\]
Where $(e^{(1/2)x^2})^{\star_n}$ represents the following:
\[
\underbrace{ e^{(1/2)e^2} \star e^{(1/2)e^2} ... \star e^{(1/2)e^2} }_{n}
\]
Which leads to the following proposition, which can be shown through induction: 
\[
(e^{(1/2)e^2})^{\star_n}= \frac{1}{2^{n-1}}\cdot \frac{ x^{2(n-1)} \cdot e^{(1/2)x^2}}{(n-1)!}
\]
For $n\geq1$. Applying this proposition to the cases in which $n\geq 1$, we have the following solution for $u(x,t)$:
\[
u(x,t)= 1 +\sum_{n=1}^\infty \left( \frac{(\int_0^t H(w)dw)^n}{n!} \cdot  \frac{1}{2^{n-1}}\cdot \frac{ x^{2(n-1)} \cdot e^{(1/2)x^2}}{(n-1)!} \right)
\]
We now make some remarks regarding the order of our solution. For the sake of simplicity, we assume that our function $\int_0^t H(w) dw= 1$, giving us the following:
\[
u(x,t) =1+\sum_{n=1}^\infty \left( \frac{ x^{(2n -2)}\cdot e^{ \frac{x^2}{2} } }{n!\cdot 2^{n-1} \cdot (n-1)! }\right)
\]
Notice that:
\[
u(x,t) =1+ \sum_{n=1}^\infty \left( \frac{ x^{(2n -2)}\cdot e^{ \frac{x^2}{2} } }{n!\cdot 2^{n-1} \cdot (n-1)! }\right)
=
1+e^{ \frac{x^2}{2} }\cdot  \sum_{n=0}^{\infty}\frac{x^{2n}\cdot}{(n+1)\cdot(n!)^2\cdot 2^{n} },
\]
and that 
\[
\sum_{n=0}^{\infty}\frac{x^{2n}\cdot}{(n+1)\cdot(n!)^2\cdot 2^{n} }
\]
is a sum $\mathcal{S}$ of elements in $\mathbb{R}^+$, where one can take the sum of first two terms where $n=0, 1$ as evidence that $\mathcal{S}\geq 1$. 
From this it follows clearly that 
\[
e^{\frac{x^2}{2}} \cdot \mathcal{S} \geq  e^{\frac{x^2}{2} } ,
\]
and since
\[
\lim_{x \to \infty} e^{-x} (1+e^{\frac{x^2}{2} }) =\infty,
\]
our solution is not of exponential order. Alternatively, notice that 
\[
(n+1)\cdot(n!)^2 \cdot 2^n \geq (n!) \cdot 2^n \cdot 2^n ,
\]
because ultimately, 
\[
n! \geq a^n
\]
for some fixed $a\in \mathbb{R}$. From this inequality we conclude that:
\[
e^{ \frac{x^2}{2} }\cdot  \sum_{n=0}^{\infty}\frac{x^{2n}\cdot}{(n+1)\cdot(n!)^2\cdot 2^{n} } \leq e^{ \frac{x^2}{2}} \cdot \sum_{n=0}^\infty \frac{ x^{2n} }{ n! \cdot 4^n } = e^{ \frac{x^2}{2} } \cdot \sum_{n=0}^\infty \frac{ \left( \frac{x^2}{4} \right)^n }{n!} = e^{ \frac{x^2}{2} } \cdot e^{ \frac{ x^2}{4} }  = e^{ \frac{3x^2}{4} }
\]
Noticing that 
\[
\lim_{x\to \infty} e^{-x^2}(1+e^{\frac{3x^2}{4 }}) = \lim_{x\to \infty}\left[e^{-x^2}+e^{ \frac{ -x^2}{4} }\right] = \lim_{x\to \infty} e^{-x^2} + \lim_{x\to \infty} e^{\frac{-x^2}{4} } =0,
\]
We see that our solution is in fact, of exponential squared order. Since our solution was not of exponential order, one would not have been able to calculate the solution to our PDE in equation (\ref{lastex}) with a Fourier integral transform, which can only handle equations up to exponential order.

\end{document}